\begin{document}

\newcommand{\F}{\mathcal{F}}
\newcommand{\R}{\mathbb R}
\newcommand{\T}{\mathbb T}
\newcommand{\N}{\mathbb N}
\newcommand{\Z}{\mathbb Z}
\newcommand{\C}{\mathbb C}  
\newcommand{\h}[2]{\mbox{$ \widehat{H^{#1}_{#2}}$}}
\newcommand{\hh}[3]{\mbox{$ \widehat{H^{#1}_{#2, #3}}$}} 
\newcommand{\n}[2]{\mbox{$ \| #1\| _{ #2} $}} 
\newcommand{\x}{\mbox{$X^r_{s,b}$}} 
\newcommand{\xx}{\mbox{$X_{s,b}$}}
\newcommand{\X}[3]{\mbox{$X^{#1}_{#2,#3}$}} 
\newcommand{\XX}[2]{\mbox{$X_{#1,#2}$}}
\newcommand{\q}[2]{\mbox{$ {\| #1 \|}^2_{#2} $}}
\newcommand{\e}{\varepsilon}
\newcommand{\lb}{\langle}
\newcommand{\rb}{\rangle}
\newcommand{\ls}{\lesssim}
\newcommand{\gs}{\gtrsim}
\newcommand{\pd}{\partial}
\newtheorem{lemma}{Lemma} 
\newtheorem{kor}{Corollary} 
\newtheorem{theorem}{Theorem}
\newtheorem{prop}{Proposition}

\title[Cauchy-problem for gKP-II equations]{On the Cauchy-problem for generalized 
Kadomtsev-Petviashvili-II equations}

\author[A.~Gr{\"u}nrock]{Axel~Gr{\"u}nrock}

\address{Axel~Gr{\"u}nrock: Rheinische Friedrich-Wilhelms-Universit\"at Bonn,
Mathematisches Institut, Beringstra{\ss}e 1, 53115 Bonn, Germany.}
\email{gruenroc@math.uni-bonn.de}

\thanks{The author was partially supported by the Deutsche Forschungsgemeinschaft, Sonderforschungsbereich 611.}

\subjclass[2000]{35Q53}

\begin{abstract}
The Cauchy-problem for the generalized 
Kadomtsev-Petviashvili-II equation
$$u_t + u_{xxx} + \partial_x^{-1}u_{yy}= (u^l)_x, \quad l \ge 3,$$
is shown to be locally well-posed in almost
critical anisotropic Sobolev spaces. The proof
combines local smoothing and maximal function
estimates as well as bilinear refinements of
Strichartz type inequalities via multilinear
interpolation in $X_{s,b}$-spaces.
\end{abstract}

\maketitle

Inspired by the work of Kenig and Ziesler \cite{KZa}, \cite{KZb} we consider the Cauchy problem

\begin{equation}
 \label{CP}
u(x,y,0)=u_0(x,y), \quad (x,y) \in \R^2
\end{equation}

for the generalized Kadomtsev-Petviashvili-II equation (for short: gKP-II)

\begin{equation}
 \label{gKP-II}
u_t + u_{xxx} + \partial_x^{-1}u_{yy}= (u^l)_x,
\end{equation}

where $l \ge 3$ is an integer. Concerning earlier results on related problems for this equation we
refer to the works of Saut \cite{S93}, I{\'o}rio and Nunes \cite{IN}, and Hayashi, Naumkin, and Saut \cite{HNS}.

\quad

For the Cauchy data we shall assume $u_0 \in H^{(s)}$, where for $s=(s_1,s_2,\e)$ the Sobolev type space $H^{(s)}$
is defined by its norm in the following way: Let $\xi:=(k,\eta)\in \R^2$ denote the Fourier variables corresponding
to $(x,y) \in \R^2$ and $\lb D_x\rb ^{\sigma_1} = \F ^{-1}\lb k\rb ^{\sigma_1}\F$, $\lb D_y\rb ^{\sigma_2} = \F ^{-1}\lb \eta\rb ^{\sigma_2}\F$, as well as $\lb D_x^{-1}D_y\rb ^{\sigma_3} = \F ^{-1}\lb k^{-1}\eta\rb ^{\sigma_3}\F$, where $\F$
denotes the Fourier transform and $\lb x\rb^{\sigma}= (1 + x^2)^{\frac{\sigma}{2}}$. Setting

$$\|u_0\|_{\sigma_1,\sigma_2,\sigma_3}:= \|\lb D_x\rb ^{\sigma_1}\lb D_y\rb ^{\sigma_2}\lb D_x^{-1}D_y\rb ^{\sigma_3} u_0\|_{L^2_{xy}}$$

we define

$$\|u_0\|_{H^{(s)}}:= \|u_0\|_{s_1+2s_2+ \e,0,0}+\|u_0\|_{s_1,s_2,\e}.$$

Almost the same data spaces are considered in \cite{KZa}, \cite{KZb}, the only new element here is the additional
parameter $s_2$, which will play a major role only for powers $l \ge 4$.

\quad

Using the contraction mapping principle we will prove a local well-posedness result for \eqref{CP}, \eqref{gKP-II} with
regularity assumptions on the data as weak as possible. Two types of estimates will be be involved in the proof: On the
one hand there is the combination of local smoothing effect (\cite{KZa}, see also \cite{IMS}) and maximal
function estimate (proven in \cite{KZa}, \cite{KZb}), which has been used in \cite{KZa}, a strategy that had been
developed in \cite{KPV93} in the context of generalized KdV equations. On the other hand we rely on Strichartz type 
estimates (cf. \cite{S93}) and especially on the bilinear refinement thereof taken from \cite{HT}, \cite{H}, see also \cite{IM01}, \cite{Ta}, \cite{Tz}, \cite{T&T01}. A similar bilinear estimate involving $y$-derivatives (to be proven below) will serve to deal with the $\lb D_y\rb$ containing part of the norm. To make these two elements meet, we use Bourgain's 
Fourier restriction norm method \cite{B93}, especially the following function spaces of $X_{s,b}$-type. The basic 
space $X_{0,b}$ is given as usual by the norm

$$\|u\|_{X_{0,b}} :=\|\lb\tau - \phi(\xi)\rb^b \F u\|_{L^2_{\xi \tau}},$$

where $\phi(\xi)=k^3- \frac{\eta^2}{k}$ is the phase function of the linearized KP-II equation. According to the data
spaces chosen above we shall also use

$$\|u\|_{X_{\sigma_1, \sigma_2,\sigma_3;b}} :=\|\lb D_x\rb ^{\sigma_1}\lb D_y\rb ^{\sigma_2}\lb D_x^{-1}D_y\rb ^{\sigma_3}u\|_{X_{0,b}} $$

as well as

$$\|u\|_{X_{(s),b}} :=\|u\|_{X_{s_1+2s_2+\e,0,0;b}} +\|u\|_{X_{s_1,s_2,\e;b}} .$$

Finally the time restriction norm spaces $X_{(s),b}(\delta)$ constructed in the usual manner will become our solution spaces.
Now we can state the main result of this note.

\begin{theorem}
 \label{lwp}
 Let $s_1 > \frac12$, $s_2 \ge \frac{l-3}{2(l-1)}$ and $0 < \e \le \min{(s_1,1)}$. Then for $s=(s_1,s_2,\e)$ and
 $u_0 \in H^{(s)}$ there exist $\delta = \delta(\|u_0\|_{H^{(s)}})>0$ and $b > \frac12$ such that there is a unique
 solution $u \in X_{(s),b}(\delta)$ of \eqref{CP}, \eqref{gKP-II}. This solution is persistent and the flow map
 $S: u_0 \mapsto u$, $H^{(s)}\rightarrow X_{(s),b}(\delta)$ is locally Lipschitz.
\end{theorem}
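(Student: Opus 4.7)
The plan is Bourgain's fixed-point method applied to the Duhamel integral equation
$$u(t) = \psi(t)e^{tL}u_0 + \psi(t/\delta)\int_0^t e^{(t-t')L}\pd_x(u^l)(t')\,dt',$$
with $L=-\pd_x^3+\pd_x^{-1}\pd_y^2$ and a smooth cutoff $\psi$, solving it as a contraction in a ball of $X_{(s),b}(\delta)$ for some $b > \frac12$ to be chosen. The linear bound $\|\psi(t)e^{tL}u_0\|_{X_{(s),b}} \ls \|u_0\|_{H^{(s)}}$ and the inhomogeneous bound $\|\int_0^t e^{(t-t')L}F\,dt'\|_{X_{(s),b}(\delta)} \ls \delta^{\alpha}\|F\|_{X_{(s),b-1}(\delta)}$ (for some $\alpha>0$) are standard extensions to the anisotropic spaces $X_{(s),b}$ of the usual Bourgain machinery; the argument therefore reduces to the multilinear estimate
$$\|\pd_x(u_1\cdots u_l)\|_{X_{(s),b-1}} \ls \prod_{i=1}^l \|u_i\|_{X_{(s),b}},$$
from which existence, uniqueness, persistence, and local Lipschitz dependence all follow in the usual way.

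By definition of $\|\cdot\|_{X_{(s),b}}$, this estimate splits into two pieces, one for each summand of $\|\cdot\|_{H^{(s)}}$. The first piece, with the pure $x$-weight $\lb D_x\rb^{s_1+2s_2+\e}$ on $\pd_x(u_1\cdots u_l)$, I would treat in the spirit of \cite{KZa}: distribute the weight via a fractional Leibniz argument, then Hölder so that one factor sits in $L^2_{xyt}$ (controlled by the local smoothing estimate of \cite{KZa}, \cite{IMS}) and the other $l-1$ factors in $L^q_{xy}L^{\infty}_t$ (controlled by the maximal function estimate of \cite{KZa}, \cite{KZb}). The condition $s_1>\frac12$ is exactly the threshold of the Sobolev embedding in $x$ at the maximal function step, while the outer $\pd_x$ is absorbed by the derivative loss of local smoothing. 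The small weight $\lb D_x^{-1}D_y\rb^{\e}$ is handled, as in \cite{KZa}, \cite{KZb}, by isolating the low-$|k|$ regime and spreading the weight harmlessly among the factors; the constraint $\e\le s_1$ leaves room to do this.

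The main obstacle, and the reason for the hypothesis $s_2 \ge \frac{l-3}{2(l-1)}$, is the second piece of the multilinear estimate, containing the $y$-weight $\lb D_y\rb^{s_2}$. The maximal function / local smoothing scheme yields no $\eta$-gain and must be replaced by the Strichartz estimate (cf.\ \cite{S93}), its bilinear refinement from \cite{HT}, \cite{H}, and the new bilinear refinement involving $y$-derivatives announced in the introduction. After Leibniz, I would pair the factor carrying the $y$-weight with a second factor via this new bilinear estimate, gaining a negative power of $|\eta_{\max}|$, and estimate the remaining $l-2$ factors by interpolating between the plain Strichartz bound and the maximal function estimate. Counting how much of the $y$-weight each factor must absorb is precisely what produces the threshold $s_2 \ge \frac{l-3}{2(l-1)}$: for $l=3$ a single bilinear pairing does the whole job and one may take $s_2=0$, while for $l \ge 4$ each additional factor eats into the bilinear gain. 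Since both the bilinear and the Strichartz estimates are sharp at $b=\frac12$, the final multilinear estimate at $b=\frac12+$ has to be obtained by multilinear interpolation in $X_{s,b}$-spaces between these sharp bounds and the trivial embedding $X_{0,\frac12+}\hookrightarrow L^{\infty}_tL^2_{xy}$, as announced in the abstract.
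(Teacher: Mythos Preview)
Your overall reduction to a multilinear estimate via Bourgain's method is correct and matches the paper. The genuine gap is in your treatment of the \emph{first} piece, the $X_{s_1+2s_2+\e,0,0;b'}$-norm of $\partial_x\prod u_j$. You propose to handle it ``in the spirit of \cite{KZa}'' by local smoothing plus maximal function alone, and you claim that $s_1>\tfrac12$ is the Sobolev threshold at the maximal function step. That is not so: the Kenig--Ziesler maximal function estimate costs $\langle D_x\rangle^{\frac34+}\langle D_x^{-1}D_y\rangle^{\frac12+}$, not $\langle D_x\rangle^{\frac12+}$. Running your scheme exactly reproduces the Kenig--Ziesler thresholds $s_1>\tfrac34$, $\e>\tfrac12$, which is $\tfrac34$ derivatives worse than what Theorem~\ref{lwp} asserts. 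The paper's improvement comes from using the bilinear Strichartz refinement \eqref{bilstr}, \eqref{bilstr'} already in this first piece: one proves a second multilinear bound (with $\|D_x^{-\frac12}u_1\|\,\|D_x^{\frac12}u_2\|\,\|D_x^{\frac12}u_3\|$ on the right, landing at some $b_1<-\tfrac12$) and \emph{interpolates} it with the local smoothing/maximal function bound (which lands at $b_0<-\tfrac14$). The interpolation simultaneously pulls the $\langle D_x\rangle$- and $\langle D_x^{-1}D_y\rangle$-exponents on $u_2,u_3$ down to $\tfrac12+$ and $0+$, and pushes the left-hand $b'$ above $-\tfrac12$. Without this interpolation you cannot reach $s_1>\tfrac12$, $\e>0$.

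Two smaller points. First, the nonlinear estimate one actually needs is $\|\partial_x\prod u_j\|_{X_{(s),b'}}\lesssim\prod\|u_j\|_{X_{(s),b}}$ for some $b'>-\tfrac12$ (not $b-1$, which is $<-\tfrac12$ and would kill the $\delta$-gain in the inhomogeneous estimate); the $\delta^{\alpha}$ you want is $\delta^{1-b+b'}$. Second, the interpolation used throughout is between two genuine multilinear bounds at different $b$-levels, not between a sharp bound and the trivial embedding $X_{0,\frac12+}\hookrightarrow L^\infty_tL^2_{xy}$; and the maximal function input forces the whole estimate to be stated for functions supported in $\{|t|\le 1\}$, which you should build into the contraction argument.
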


The lower bounds on $s_1$, $s_2$, and $\e$ are optimal (except for endpoints) in the sense that scaling considerations
strongly suggest the necessity of the condition $s_1 + 2s_2 + \e \ge \frac12 + \frac{l-3}{l-1}$. Moreover, for $l=3$
$C^2$-illposedness is known for $s_1 < \frac12$ or $\e < 0$, see \cite[Theorem 4.1]{KZa}. The affirmative result in \cite{KZa}
concerning the cubic gKP-II equation, that was local well-posedness for $s_1> \frac34$, $s_2=0$, and $\e > \frac12$ is
improved here by $\frac34$ derivatives. Some effort was made to keep the number of $y$-derivatives as small as 
possible\footnote{For $l\ge 5$ the use of local smoothing effect and maximal function estimate \emph{alone} yields LWP for
$s_1 > \frac{l-3}{l-1}$, $s_2 =0$, and $\e > \frac12$, which is optimal from the scaling point of view, too. This result may
be seen as essentially contained in \cite[Theorem 2.1 and Lemma 3.2]{KZa} plus \cite[proofs of Theorem 2.10 and Theorem 2.17]{KPV93}. This variant always requires $\frac12 +$ derivatives in $y$.}, but we shall not attempt to give evidence for the
necessity of the individual lower bounds on $s_1$, $s_2$, and $\e$, respectively.

\quad

By the general arguments concerning the Fourier restriction norm method introduced in \cite{B93} and further developed
in \cite{GTV97}, \cite{KPV96}, matters reduce to proving the following multilinear estimate:

\begin{theorem}
 \label{multest}
 Let $s_1 > \frac12$, $s_2 \ge \frac{l-3}{2(l-1)}$ and $0 < \e \le \min{(s_1,1)}$. Then there exists $b'> - \frac12$, such that
 for all $b > \frac12$ and all $u_1,\dots,u_l \in X_{(s),b}$ supported in $\{|t| \le 1\}$ the estimate
 $$ \|\partial_x \prod_{j=1}^l u_j\|_{X_{(s),b'}} \ls \prod_{j=1}^l\|u_j\|_{X_{(s),b}}$$
 holds true.
\end{theorem}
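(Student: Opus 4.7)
The plan is to split the $X_{(s),b'}$-norm via the triangle inequality into its two summands $X_{s_1+2s_2+\e,0,0;b'}$ and $X_{s_1,s_2,\e;b'}$, and to treat each by dyadic Littlewood-Paley decomposition in the $x$-frequency $k$ (and in places the $y$-frequency $\eta$). Letting $P_{N_j}$ project $u_j$ onto $|k_j| \sim N_j$ and assuming by symmetry $N_1 \ge N_2 \ge \dots \ge N_l$, a fractional Leibniz rule will move the outer weights $\lb D_x\rb^\sigma$, $\lb D_y\rb^{s_2}$, $\lb D_x^{-1}D_y\rb^\e$ essentially onto $P_{N_1}u_1$ at the cost of harmless polynomial factors in the $N_j$. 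What remains is to absorb the differential $\partial_x$ by pairing $P_{N_1}u_1$ with $P_{N_2}u_2$ in a bilinear $L^2_{txy}$ estimate, while the $l-2$ low-frequency factors $u_3,\dots,u_l$ are placed in Strichartz, maximal function, or local smoothing norms.

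For the first summand the key input is the bilinear refinement of the KP-II Strichartz inequality from \cite{HT}, \cite{H}, which after transfer to $X_{0,b}$-type spaces with $b>\frac12$ provides a gain of $N_2^{1/2}N_1^{-1/2}$ in the $L^2_{txy}$-norm of $P_{N_1}u_1 \cdot P_{N_2}u_2$. The factor $N_1^{-1/2}$ is precisely what compensates the extra $\partial_x$ and explains the threshold $s_1 > \frac12$. The factors $u_3,\dots,u_l$ are then estimated in the $L^4_{xy}L^\infty_t$ maximal function norm and the $L^\infty_x L^2_{yt}$ local smoothing norm of \cite{KZa}, \cite{KZb}, together with the $L^4_{txy}$-Strichartz bound of \cite{S93}, each controlled by $X_{0,b}$ via the standard transfer principle.

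For the second summand the outer $\lb D_y\rb^{s_2}$-weight cannot be handled by the same $x$-bilinear estimate. Instead one needs the companion bilinear refinement involving $y$-derivatives announced in the introduction and to be proven in the body of the paper. Once this new estimate is available, the dyadic scheme proceeds as before, and the threshold $s_2 \ge \frac{l-3}{2(l-1)}$ emerges from counting how many of the $l$ factors must be paired bilinearly (rather than estimated in $L^4$ or an $L^\infty$-type norm) in order to harvest enough $\eta$-derivatives from the second factor while keeping the others in $L^2$-transferable norms.

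The main obstacle I expect is the singular operator $\lb D_x^{-1}D_y\rb^\e$, which blocks a plain Leibniz distribution of weights whenever an internal factor has small $x$-frequency. Isolating this resonant region and using $0 < \e \le \min(s_1,1)$, one can trade the singular $k^{-1}$ against the available $\lb k\rb^{s_1}$ on other factors and interpolate between $\lb D_x\rb$- and $\lb D_y\rb$-weights. To reach $b' > -\frac12$ from only $b > \frac12$ on the right, one does not prove a single sharp bound but rather two $l$-linear estimates, an easy one at high regularity with $b' = 0$ and a sharp one at strictly subcritical regularity, and interpolates multilinearly in the $X_{s,b}$-scale as advertised in the abstract. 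Once these reductions are in place the remaining dyadic summation closes thanks to the strict inequalities $s_1 > \frac12$ and $\e > 0$.
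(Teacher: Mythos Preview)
Your sketch has the right ingredients but differs from the paper's argument in two structural ways, and one of them hides a genuine gap.

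First, the paper does \emph{not} interpolate between an ``easy high regularity estimate at $b'=0$'' and a sharp subcritical one. There is no usable $b'=0$ endpoint here: every mechanism that absorbs the extra $\partial_x$ (the dual bilinear estimate \eqref{bilstr'}, the dual local smoothing \eqref{lsxtheta'}) already forces $b'<0$ on the left. What the paper actually does is produce \emph{two} endpoint $l$-linear estimates, one built from local smoothing plus the maximal function (left side in $X_{0,b_0}$ with $b_0<-\tfrac14$, right side costly: $\lb D_x\rb^{3/4+}\lb D_x^{-1}D_y\rb^{1/2+}$ on two factors), and one built from the bilinear Strichartz pair \eqref{bilstr}--\eqref{bilstr'} (left side in $X_{0,b_1}$ with $b_1<-\tfrac12$, right side cheap: only $D_x^{\pm 1/2}$). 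Multilinear interpolation between these two, with weight $\theta\ll 1$ on the first, yields $b'=\theta b_0+(1-\theta)b_1>-\tfrac12$ while pushing the regularity cost of the maximal function estimate down to $\lb D_x\rb^{1/2+\theta/4+}\lb D_x^{-1}D_y\rb^{\theta/2+}$. A further symmetrisation among $u_2,\dots,u_l$ produces the exponent $\alpha_2=\frac{l-3}{2(l-1)}$. Your dyadic/Leibniz framework could in principle be reorganised to mimic this, but the interpolation you describe is not the one that works.

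Second, and more seriously, you misidentify the hard case. The obstruction to distributing $\lb D_y\rb^{s_2}\lb D_x^{-1}D_y\rb^{\e}$ is not a generic ``internal factor with small $x$-frequency''. It is the specific frequency configuration where $u_1$ carries the largest $|k|$ but $u_2$ carries the largest $|\eta|$, with $|k_2|\ll|k|$; then $|k^{-1}\eta|$ of the product is large and cannot be placed on $u_1$, while putting $\lb D_y\rb^{s_2}$ on $u_2$ leaves no gain to absorb $\partial_x$. Trading $k^{-1}$ against $\lb k\rb^{s_1}$ on other factors, as you propose, does not close this: you lose exactly the half-derivative you need. The paper's fix is to apply the new $y$-bilinear estimate with the \emph{asymmetric} pairing $M(u_1u_3\cdots u_l,\,u_2)$, because in this regime the symbol satisfies $|k\eta_2-k_2\eta|^{1/2}\sim|k\eta|^{1/2}$, so one extracts a genuine $D_y^{1/2}$ on the product at the cost of $D_x^{1/2}$ on $u_2$ and an $\widehat L^\infty_xL^2_{yt}$-norm on $u_1u_3\cdots u_l$ (handled by \eqref{bilstr} and Sobolev). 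This is then interpolated once more with the two endpoint estimates above. Your outline mentions the $y$-bilinear estimate but does not indicate this pairing or why it succeeds, and without it the case $|k_1|=\max$, $|\eta_1|\ll|\eta|\ls|\eta_2|$, $|k_2|\ll|k|$ remains open.
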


To prepare for the proof of Theorem \ref{multest} let us first recall those estimates for free solutions $W(t)u_0$ of the
linearized KP-II equation, which we take over from the literature. First we have the local smoothing estimate from
\cite[Lemma 3.2]{KZa}: For $0 \le \lambda \le 1$

\begin{equation}
 \label{ls}
\|D_x^{(1-\lambda)}(D_x^{-1}D_y)^{\lambda}W(t)u_0\|_{L_x^{\infty}L_{yt}^2}\ls \|u_0\|_{L^2_{xy}}
\end{equation}

and hence by the transfer principle \cite[Lemma 2.3]{GTV97} for $b > \frac12$

\begin{equation}
  \label{lsx}
 \|D_x^{(1-\lambda)}(D_x^{-1}D_y)^{\lambda}u\|_{L_x^{\infty}L_{yt}^2}\ls \|u\|_{X_{0,b}}.
\end{equation}

Interpolation with the trivial case $L^2_{xyt}=X_{0,0}$ and duality give for $0\le \theta \le 1$,
 $\frac{1}{p_{\theta}}=\frac{1-\theta}{2}$

\begin{equation}
  \label{lsxtheta}
 \|[D_x^{(1-\lambda)}(D_x^{-1}D_y)^{\lambda}]^{\theta}u\|_{L_x^{p_{\theta}}L_{yt}^2}\ls \|u\|_{X_{0,\theta b}}
\end{equation}

as well as 

\begin{equation}
  \label{lsxtheta'}
 \|[D_x^{(1-\lambda)}(D_x^{-1}D_y)^{\lambda}]^{\theta}u\|_{X_{0,-\theta b}}\ls \|u\|_{L_x^{p'_{\theta}}L_{yt}^2}.
\end{equation}

(For H\"older exponents $p$ we will always have $\frac1p + \frac1{p'}=1$.) To complement the local smoothing effect, we shall use the maximal function estimate

\begin{equation}
 \label{max}
\|W(t)u_0\|_{L_x^4L_{yT}^{\infty}}\ls \|\lb D_x\rb^{\frac34 +}\lb D_x^{-1}D_y\rb^{\frac12 +} u_0\|_{L^2_{xy}}
\end{equation}

due to Kenig and Ziesler \cite[Theorem 2.1]{KZa}, which is probably the hardest part of the whole story. The capital $T$
here indicates, that this estimate is only valid local in time, the $+$-signs at the exponents on the right denote
positive numbers, which can be made arbitrarily small at the cost of the implicit constant but independent of other
parameters. (This notation will be used repeatedly below.) The transfer principle implies for $u$ supported in $\{|t|\le 1\}$

\begin{equation}
 \label{maxx}
\|u\|_{L_x^4L_{yt}^{\infty}}\ls \|\lb D_x\rb^{\frac34 +}\lb D_x^{-1}D_y\rb^{\frac12 +} u\|_{X_{0,b}},
\end{equation}

where $b > \frac12$. The Strichartz type estimate

\begin{equation}
 \label{str}
\|W(t)u_0\|_{L_{xyt}^4}\ls \|u_0\|_{L^2_{xy}},
\end{equation}

taken from \cite[Proposition 2.3]{S93}, becomes

\begin{equation}
 \label{strx}
\|u\|_{L_{xyt}^4}\ls \|u\|_{X_{0,b}}, \quad b > \frac12.
\end{equation}

For its bilinear refinement

\begin{equation}
 \label{bilstr}
\|uv\|_{L_{xyt}^2}\ls \|D_x^{\frac12}u\|_{X_{0,b}}\|D_x^{-\frac12}v\|_{X_{0,b}}, \quad b > \frac12,
\end{equation}

and the dualized version thereof

\begin{equation}
 \label{bilstr'}
\|D_x^{\frac12}(uv)\|_{X_{0,-b}}\ls \|D_x^{\frac12}u\|_{X_{0,b}}\|v\|_{L_{xyt}^2}, \quad b > \frac12,
\end{equation}

we refer to \cite[Theorem 3.3 and Proposition 3.5]{H}. In order to estimate the $X_{s_1,s_2,\e;b'}$-norm of the nonlinearity
the following bilinear estimate involving $y$-derivatives will be useful. We introduce the bilinear pseudodifferential
operator $M(u,v)$ in terms of its Fourier transform

$$\widehat{M(u,v)}(\xi):= \int_{\xi=\xi_1+\xi_2}|k_1\eta -k \eta_1|^{\frac12}\widehat{u}(\xi_1)\widehat{v}(\xi_2)d\xi_1$$

and define the auxiliary space $\widehat{L}_x^pL^q_{yt}$ by $\|f\|_{\widehat{L}_x^pL^q_{yt}}:= \|\F_x f\|_{L_x^{p'}L^q_{yt}},$
where $\F_x$ denotes the partial Fourier transform with respect to the first space variable $x$ only. Then we have:

\begin{lemma}\label{lbily}
 \begin{equation}\label{bily}
  \|M(W(t)u_0,W(t)v_0)\|_{\widehat{L}_x^1L^2_{yt}} \ls \|D_x^{\frac12}u_0\|_{L^2_{xy}}\|D_x^{\frac12}v_0\|_{L^2_{xy}}.
 \end{equation}
\end{lemma}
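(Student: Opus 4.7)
The plan is a three-step argument: Plancherel in $(y,t)$, Minkowski's integral inequality in $k_1$, and an $L^2_{\eta,\tau}$-bound that exploits the fact that the symbol $|k_1\eta-k\eta_1|^{1/2}$ of $M$ exactly cancels the Jacobian singularity of the $\delta$-substitution on the characteristic variety of $\phi$.

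Applying Plancherel in $(y,t)$ rewrites the left hand side as $\sup_k\|\F_{xyt}M(W(t)u_0,W(t)v_0)(k,\cdot,\cdot)\|_{L^2_{\eta,\tau}}$, and the convolution formula expresses the spacetime Fourier transform as $\int G_k(k_1,\cdot,\cdot)\,dk_1$ with
$$G_k(k_1,\eta,\tau):=\int|k_1\eta-k\eta_1|^{1/2}\widehat{u_0}(k_1,\eta_1)\widehat{v_0}(k-k_1,\eta-\eta_1)\,\delta(\tau-\phi(\xi_1)-\phi(\xi_2))\,d\eta_1.$$
By Minkowski's inequality in $k_1$, the claim reduces to the pointwise-in-$k_1$ bound $\|G_k(k_1,\cdot,\cdot)\|_{L^2_{\eta,\tau}}\ls|k_1(k-k_1)|^{1/2}\|\widehat{u_0}(k_1,\cdot)\|_{L^2_\eta}\|\widehat{v_0}(k-k_1,\cdot)\|_{L^2_\eta}$, after which Cauchy--Schwarz in $k_1$ together with the substitution $k_2=k-k_1$ in the second factor produces $\|D_x^{1/2}u_0\|_{L^2_{xy}}\|D_x^{1/2}v_0\|_{L^2_{xy}}$ uniformly in $k$.

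For the inner bound, the key computation is $\pd_{\eta_1}[\phi(\xi_1)+\phi(\xi_2)]=-2(k\eta_1-k_1\eta)/(k_1k_2)$. Resolving the $\delta$ in $G_k$ on each of its two branches $\eta_1=\eta_1^\pm(k_1,\eta,\tau)$ contributes the factor $|k_1k_2|/(2|k_1\eta-k\eta_1^\pm|)$, and combining with the symbol gives
$$G_k(k_1,\eta,\tau)=\sum_{\pm}\frac{|k_1k_2|}{2|k_1\eta-k\eta_1^\pm|^{1/2}}\widehat{u_0}(\xi_1^\pm)\widehat{v_0}(\xi_2^\pm).$$
Squaring via $|a+b|^2\le 2(|a|^2+|b|^2)$, reversing the change of variables $(\eta,\tau)\to(\eta,\eta_1)$ on each branch (whose Jacobian $2|k_1\eta-k\eta_1|/|k_1k_2|$ exactly kills the surviving $|k_1\eta-k\eta_1|^{-1}$), and finally substituting $\eta\to\eta_2:=\eta-\eta_1$ produces the desired $L^2_{\eta,\tau}$-bound.

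The main step is this cancellation: the specific exponent $\tfrac12$ on $|k_1\eta-k\eta_1|$ in the symbol of $M$ is exactly what is needed, since a smaller exponent would leave a non-integrable singularity at the resonance set $\{k_1\eta=k\eta_1\}$, while a larger exponent would produce an unwanted factor of $|k|$ from the $k_1$-integration over $[0,k]$ that would spoil the uniformity in $k$ demanded by the $\widehat{L}^1_x$-norm.
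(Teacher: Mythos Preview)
Your proof is correct and follows essentially the same route as the paper: compute the space-time Fourier transform via the $\delta$-constraint, resolve it on the two branches $\eta_1^\pm$, observe that the Jacobian $|g'(\eta_1)|=\frac{2}{|k_1k_2|}|k_1\eta-k\eta_1|$ cancels against the symbol $|k_1\eta-k\eta_1|^{1/2}$ to leave $|k_1k_2|^{1/2}$, and conclude by Minkowski in $k_1$ and Cauchy--Schwarz. The only cosmetic differences are that the paper takes the $L^2_\tau$- and $L^2_\eta$-norms in two separate Minkowski steps (using the intermediate variable $\lambda=\sqrt{k_1k_2a/k}$ rather than $\eta_1$ directly), whereas you handle $L^2_{\eta,\tau}$ jointly; the substance is identical.
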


\begin{proof}\footnote{It is not apparent from the proof, but there is a very simple idea behind Lemma \ref{lbily}. If we
take the partial Fourier transform $\F_x W(t)u_0 (k)= e^{itk^3}e^{i\frac{t}{k}\partial^2_{y}}\F_xu_0(k)$ with respect to the
$x$-variable only, we obtain a free solution of the linear Schr\"odinger equation with rescaled time variable $s=\frac{t}{k}$,
multiplied by a phase factor of size one. So any space time estimate for the Schr\"odinger equation should give a 
corresponding estimate for linearized KP-type equations. This idea was exploitet in \cite{GPS08}, \cite{G09} to obtain
suitable substitutes for Strichartz type estimates for semiperiodic and periodic problems. From this point of view Lemma
\ref{lbily} corresponds to the one-dimensional estimate $$\|D_y^{\frac12}(e^{it\partial_y^2}u_0 e^{-it\partial_y^2}v_0)\|_{L^2_{yt}} \ls \|u_0\|_{L^2_y}\|v_0\|_{L^2_y}.$$}
 We have
\begin{eqnarray*}
 & \widehat{M(W(t)u_0,W(t)v_0)}(\xi,\tau)\\
= & \int_{\xi=\xi_1+\xi_2}|k_1\eta -k \eta_1|^{\frac12}\delta(\tau -k_1^3-k_2^3+ \frac{\eta_1^2}{k_1}+\frac{\eta_2^2}{k_2})
\widehat{u_0}(\xi_1)\widehat{v_0}(\xi_2)d\xi_1.
\end{eqnarray*}
Because of $\frac{\eta_1^2}{k_1}+\frac{\eta_2^2}{k_2}=\frac{\eta^2}{k}+\frac{k}{k_1k_2}(\eta_1-\frac{k_1}{k}\eta)^2$ and
with $a=\tau -k_1^3-k_2^3+ \frac{\eta^2}{k}$ as well as $g(\eta_1)=\frac{k}{k_1k_2}(\eta_1-\frac{k_1}{k}\eta)^2-a$ this
equals
$$\int_{\xi=\xi_1+\xi_2}|k_1\eta -k \eta_1|^{\frac12}\delta(g(\eta_1))\widehat{u_0}(\xi_1)\widehat{v_0}(\xi_2)d\eta_1 dk_1.$$
The zero's of $g$ are $\eta_1^{\pm}=\frac{k_1}{k}\eta\pm \sqrt{\frac{k_1k_2a}{k}}$ and for the derivative we have
$|g'(\eta_1)|=\frac{2}{|k_1k_2|}|k_1\eta -k \eta_1|$. So we get the two contributions
$$I^{\pm}(\xi,\tau)=\frac12 \int_{k_1+k_2=k}\frac{|k_1k_2|}{|k_1\eta -k \eta_1^{\pm}|^{\frac12}}
\widehat{u_0}(k_1,\eta_1^{\pm})\widehat{v_0}(k_2,\eta-\eta_1^{\pm}) dk_1.$$
By Minkowski's integral inequality
$$\|I^{\pm}(\xi,\cdot)\|_{L^2_{\tau}} \ls \int_{k_1+k_2=k}|k_1k_2|\||k_1\eta -k \eta_1^{\pm}|^{-\frac12}
\widehat{u_0}(k_1,\eta_1^{\pm})\widehat{v_0}(k_2,\eta-\eta_1^{\pm})\|_{L^2_{\tau}}dk_1,$$
where, with $\lambda:=\sqrt{\frac{k_1k_2a}{k}}$, the square of the last $L^2_{\tau}$-norm equals
\begin{eqnarray*}
 & \int |k_1\eta -k \eta_1^{\pm}|^{-1}|\widehat{u_0}(k_1,\frac{k_1}{k}\eta\pm \lambda)
\widehat{v_0}(k_2,\frac{k_2}{k}\eta\mp \lambda)|^2 d\tau \\
\le & \frac{2}{|k_1k_2|}\int|\widehat{u_0}(k_1,\frac{k_1}{k}\eta\pm \lambda)
\widehat{v_0}(k_2,\frac{k_2}{k}\eta\mp \lambda)|^2 d\lambda,
\end{eqnarray*}
since $d\tau =2\frac{\lambda k}{|k_1k_2|}d \lambda = \mp\frac{2}{k_1k_2}(k_1\eta -k \eta_1^{\pm})d \lambda$. This gives
$$\|I^{\pm}(\xi,\cdot)\|_{L^2_{\tau}} \ls \int_{k_1+k_2=k}|k_1k_2|^{\frac12}
\left( \int|\widehat{u_0}(k_1,\frac{k_1}{k}\eta\pm \lambda)
\widehat{v_0}(k_2,\frac{k_2}{k}\eta\mp \lambda)|^2 d \lambda \right)^{\frac12} dk_1.$$
Using Parseval and again Minkowski's inequality for the $L^2_{\eta}$-norm we arrive at
\begin{eqnarray*}
 & \|\F_x M(W(t)u_0,W(t)v_0)(k)\|_{L^2_{yt}}\\
\ls & \int_{k_1+k_2=k}|k_1k_2|^{\frac12}\left( \int|\widehat{u_0}(k_1,\frac{k_1}{k}\eta\pm \lambda)
\widehat{v_0}(k_2,\frac{k_2}{k}\eta\mp \lambda)|^2 d \lambda d\eta \right)^{\frac12} dk_1 \\
= & \int_{k_1+k_2=k}|k_1k_2|^{\frac12} \|\widehat{u_0}(k_1,\cdot)\|_{L^2_{\eta}}\|\widehat{v_0}(k_2,\cdot)\|_{L^2_{\eta}}dk_1
\ls \|D_x^{\frac12}u_0\|_{L^2_{xy}}\|D_x^{\frac12}v_0\|_{L^2_{xy}}
\end{eqnarray*} 
by Cauchy-Schwarz and a second application of Parseval's identity. 
\end{proof}
\begin{kor} Let $b> \frac12$. Then
 \begin{equation}\label{bilyx}
  \|M(u,v)\|_{\widehat{L}_x^1L^2_{yt}} \ls \|D_x^{\frac12}u\|_{X_{0,b}}\|D_x^{\frac12}v\|_{X_{0,b}}
 \end{equation}
and
 \begin{equation}\label{bilyx'}
  \|D_x^{-\frac12}M(u,v)\|_{X_{0,-b}}\ls \|u\|_{\widehat{L}_x^{\infty}L^2_{yt}}\|D_x^{\frac12}v\|_{X_{0,b}}.
 \end{equation}
\end{kor}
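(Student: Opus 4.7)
The plan is to derive both estimates from Lemma \ref{lbily}: \eqref{bilyx} by the transfer principle, and \eqref{bilyx'} by duality combined with the total symmetry of the symbol $|k_1\eta_2 - k_2\eta_1|^{\frac12}$.

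For \eqref{bilyx}, I would apply the transfer principle as in \cite[Lemma 2.3]{GTV97}. For $b > \frac12$, any $u \in X_{0,b}$ admits the representation $u = \int_{\R}\lb\theta\rb^{-b} e^{it\theta} W(t) u_0^{(\theta)}\, d\theta$ with $\widehat{u_0^{(\theta)}}(\xi) := \lb\theta\rb^b \hat u(\xi, \phi(\xi)+\theta)$, so that $\int \|u_0^{(\theta)}\|_{L^2_{xy}}^2\, d\theta = \|u\|_{X_{0,b}}^2$; and similarly for $v$. Since $M$ is a purely spatial bilinear Fourier multiplier, the modulations $e^{it\theta_j}$ factor out of $M$, and the norm $\widehat{L}_x^1 L^2_{yt}$ is insensitive to the overall $t$-phase $e^{it(\theta_1+\theta_2)}$. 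Minkowski's integral inequality, Lemma \ref{lbily} applied to each pair $(u_0^{(\theta_1)}, v_0^{(\theta_2)})$, and Cauchy--Schwarz in $(\theta_1,\theta_2)$ (which converges because $\lb\theta\rb^{-b}\in L^2_\theta$ thanks to $b>\frac12$) then yield \eqref{bilyx}.

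For \eqref{bilyx'}, I would argue by duality. Writing the $X_{0,-b}$-norm as the supremum over $\|w\|_{X_{0,b}} = 1$ and unfolding the pairing via Plancherel,
$$|\lb D_x^{-\frac12}M(u,v), w\rb| \le \iint \frac{|k_1\eta_2 - k_2\eta_1|^{\frac12}}{|k_1+k_2|^{\frac12}} |\hat u(\xi_1,\tau_1)|\,|\hat v(\xi_2,\tau_2)|\,|\hat w(\xi_1+\xi_2,\tau_1+\tau_2)| \, d\xi_1 d\tau_1 d\xi_2 d\tau_2.$$
The decisive observation is that on the resonance hyperplane $\xi_1+\xi_2+\xi_3=0$, $\tau_1+\tau_2+\tau_3=0$ with $\xi_3 := -(\xi_1+\xi_2)$, the identities $k_1\eta_2 - k_2\eta_1 = k_3\eta_1 - k_1\eta_3 = k_2\eta_3 - k_3\eta_2$ reveal that the symbol $|k_1\eta_2 - k_2\eta_1|^{\frac12}$ is totally symmetric in the three slots. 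Absorbing the factor $|k_1+k_2|^{-\frac12} = |k_3|^{-\frac12}$ onto the third slot then allows the ``$u$-slot'' and the ``$w$-slot'' to be interchanged.

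Concretely, introducing auxiliary functions $\tilde v, \tilde w$ by $\hat{\tilde v} := |\hat v|$ and $\hat{\tilde w}(\xi,\tau) := |k|^{-\frac12}|\hat w(-\xi,-\tau)|$, the oddness of $\phi$ gives $\|D_x^{\frac12}\tilde w\|_{X_{0,b}} = \|w\|_{X_{0,b}}$, while trivially $\|D_x^{\frac12}\tilde v\|_{X_{0,b}} = \|D_x^{\frac12}v\|_{X_{0,b}}$. Using the symmetry, the displayed right-hand side equals $\int |\hat u(\xi_1,\tau_1)|\,|\widehat{M(\tilde v,\tilde w)}(-\xi_1,-\tau_1)|\, d\xi_1 d\tau_1$. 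Cauchy--Schwarz in $(\eta_1,\tau_1)$ for fixed $k_1$ followed by H\"older $L_k^1 \cdot L_k^\infty$ -- which, after partial Fourier transform in $x$ and Plancherel in $(y,t)$, is just the duality $\widehat{L}_x^\infty L^2_{yt} \leftrightarrow \widehat{L}_x^1 L^2_{yt}$ -- bounds this by $\|u\|_{\widehat{L}_x^\infty L^2_{yt}}\|M(\tilde v,\tilde w)\|_{\widehat{L}_x^1 L^2_{yt}}$, and \eqref{bilyx} applied to the last factor gives $\|D_x^{\frac12}\tilde v\|_{X_{0,b}}\|D_x^{\frac12}\tilde w\|_{X_{0,b}} = \|D_x^{\frac12}v\|_{X_{0,b}}\|w\|_{X_{0,b}}$. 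Taking the supremum over $w$ proves \eqref{bilyx'}. The one step requiring genuine care is the symmetry-based index interchange and the concomitant tracking of the $|k_3|^{-\frac12}$ weight through the Fourier rearrangement; once that is straight, the rest is routine.
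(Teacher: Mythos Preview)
Your proof is correct and follows essentially the same route as the paper: \eqref{bilyx} via the transfer principle, and \eqref{bilyx'} via duality using the symmetry of the multiplier $M$. The paper merely condenses your trilinear symmetry computation into the single observation that, for fixed $v$, the adjoint of $M_v:u\mapsto M(u,v)$ is $M_{\overline{v}}$ (together with $\|\overline{v}\|_{X_{0,b}}=\|v\|_{X_{0,b}}$), which is exactly the slot-interchange you carry out by hand.
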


\begin{proof}
 Lemma \ref{lbily} implies \eqref{bilyx} via the transfer principle, \eqref{bilyx'} is then obtained by duality. In fact, if
 we fix $v$ and consider the linear map $M_v(u):=M(u,v)$, then its adjoint is given by $M^*_v=M_{\overline{v}}$, and we have
 $\|\overline{v}\|_{X_{0,b}}=\|v\|_{X_{0,b}}$.
\end{proof}

Now we are prepared for the proof of the central multilinear estimate.

\begin{proof}[Proof of Theorem \ref{multest}] \hfill \\
 \quad

1. We use \eqref{lsxtheta'} with $\lambda = 0$, $\theta = \frac12$ and H\"older to obtain for $b_0 < - \frac14$

\begin{eqnarray*}
 & \|D_x^{\frac12} \prod_{j=1}^l u_j\|_{X_{0,b_0}}\ls \|\prod_{j=1}^l u_j\|_{L_x^{\frac43}L^2_{yt}}\\
\ls & \|u_1\|_{L_x^{4}L^2_{yt}}\|u_2\|_{L_x^{4}L^{\infty}_{yt}}\|u_3\|_{L_x^{4}L^{\infty}_{yt}}\prod_{j \ge 4}\|u_j\|_{L^{\infty}_{xyt}}.
\end{eqnarray*}

For the first factor we have by \eqref{lsxtheta}, again with $\lambda = 0$, $\theta = \frac12$,

$$\|u_1\|_{L_x^{4}L^2_{yt}} \ls \|D_x^{-\frac12}u_1\|_{X_{0,\frac14+}},$$

while the second and third factor are estimated by \eqref{maxx}

$$\|u_{2,3}\|_{L_x^{4}L^{\infty}_{yt}}\ls \|\lb D_x\rb^{\frac34 +}\lb D_x^{-1}D_y\rb^{\frac12 +} u_{2,3}\|_{X_{0,b}},$$

where $b > \frac12$. It is here that the time support assumption is needed. For $j \ge 4$ we use Sobolev embeddings in all variables to obtain for $b > \frac12$

$$\|u_j\|_{L^{\infty}_{xyt}} \ls \|\lb D_x\rb^{\frac12 +}\lb D_y\rb^{\frac12 +} u_{j}\|_{X_{0,b}}.$$

Summarizing we have for $b_0 < - \frac14$, $b > \frac12$

\begin{eqnarray}
 \label{1}
\|D_x^{\frac12} \prod_{j=1}^l u_j\|_{X_{0,b_0}}\ls \|D_x^{-\frac12}u_1\|_{X_{0,b}}
\|\lb D_x\rb^{\frac34 +}\lb D_x^{-1}D_y\rb^{\frac12 +} u_{2}\|_{X_{0,b}} \\ 
\times  \|\lb D_x\rb^{\frac34 +}\lb D_x^{-1}D_y\rb^{\frac12 +} u_{3}\|_{X_{0,b}}
\prod_{j \ge 4}\|\lb D_x\rb^{\frac12 +}\lb D_y\rb^{\frac12 +} u_{j}\|_{X_{0,b}}. \nonumber
\end{eqnarray}

2. Combining the dual version \eqref{bilstr'} of the bilinear estimate with H\"older's inequality, \eqref{bilstr} and
Sobolev embeddings we obtain for $b_1 < - \frac12$, $b > \frac12$

\begin{eqnarray}
 \label{2}
&\|D_x^{\frac12} \prod_{j=1}^l u_j\|_{X_{0,b_1}}\ls \|D_x^{\frac12}u_3\|_{X_{0,b}}\|u_1u_2\|_{L^{2}_{xyt}}\prod_{j \ge 4}\|u_j\|_{L^{\infty}_{xyt}} \\
 \ls &\|D_x^{-\frac12}u_1\|_{X_{0,b}}\|D_x^{\frac12}u_2\|_{X_{0,b}}\|D_x^{\frac12}u_3\|_{X_{0,b}}
\prod_{j \ge 4}\|\lb D_x\rb^{\frac12 +}\lb D_y\rb^{\frac12 +} u_{j}\|_{X_{0,b}}. \nonumber
\end{eqnarray}

3. Bilinear interpolation involving $u_2$ and $u_3$ gives

\begin{eqnarray*}
 \|D_x^{\frac12} \prod_{j=1}^l u_j\|_{X_{0,b'}} & \ls & \|D_x^{-\frac12}u_1\|_{X_{0,b}}
 \|\lb D_x\rb^{\frac12 +\frac{\theta}{4} +}\lb D_x^{-1}D_y\rb^{\frac{\theta}{2} +} u_{2}\|_{X_{0,b}} \\
& \times & \|\lb D_x\rb^{\frac12 +\frac{\theta}{4} +}\lb D_x^{-1}D_y\rb^{\frac{\theta}{2} +} u_{3}\|_{X_{0,b}}
\prod_{j \ge 4}\|\lb D_x\rb^{\frac12 +}\lb D_y\rb^{\frac12 +} u_{j}\|_{X_{0,b}},
\end{eqnarray*}

where $0 < \theta \ll 1$ and $b' = \theta b_0 + (1-\theta)b_1$. Now symmetrization via $(l-1)$-linear interpolation
among $u_2, \dots, u_l$ yields

\begin{equation}
 \label{3}
\|D_x^{\frac12} \prod_{j=1}^l u_j\|_{X_{0,b'}}  \ls  \|D_x^{-\frac12}u_1\|_{X_{0,b}}\prod_{j \ge 2}
\|\lb D_x\rb^{\alpha_1 +}\lb D_y\rb^{\alpha_2 +}\lb D_x^{-1}D_y\rb^{\alpha_3 +} u_{j}\|_{X_{0,b}},
\end{equation}

with $b,b'$ as before, $\alpha_1 = \frac12 + \frac{\theta}{2(l-1)}$, $\alpha_2 = \frac{l-3}{2(l-1)}$, and $\alpha_3 = \frac{\theta}{l-1}$. Using $\lb \eta \rb \le \lb k \rb\lb k^{-1}\eta\rb$, we may replace $\alpha_2 +$ by $\alpha_2 $
in \eqref{3}. Now, for given $s_1 > \frac12$ and $\e > 0$ we choose $\theta$ close enough to zero, so that $\alpha_1 < s_1$
and $\alpha_3 < \e$, and $b_0$ (respectively $b_1$) close enough to $-\frac14$ (respectively to $-\frac12$), so that
$b' > - \frac12$. Then, assuming by symmetry that $u_1$ has the largest frequency with respect to the $x$-variable
(i. e. $|k_1| \ge |k_{2, \dots , l}|$), we obtain

\begin{equation}
  \label{4}
\|\partial_x \prod_{j=1}^l u_j\|_{X_{s_1+2s_2+\e,0,0;b'}}  \ls \|u_1\|_{X_{s_1+2s_2+\e,0,0;b}}
\prod_{j \ge 2}\|u_{j}\|_{X_{s_1,s_2,\e;b}}.
\end{equation}

4. The same upper bound holds for $\|\partial_x \prod_{j=1}^l u_j\|_{X_{s_1,s_2,\e;b'}}$, if $|\eta|\le |k|$ (or even if
$|\eta|\le |k|^2$), where $|k|$ (respectively $|\eta|$) are the frequencies in $x$ (respectively in $y$) of the whole product.
In the case where $|k|\le 1$ - assuming $\e \le \min{(s_1,1)}$ and $b'$ sufficiently close to $-\frac12$ - the estimate

\begin{equation}
  \label{5}
\|\partial_x \prod_{j=1}^l u_j\|_{X_{s_1,s_2,\e;b'}}\ls\prod_{j=1}^l \|u_j\|_{X_{s_1,s_2,\e;b}}
\end{equation}

is easily derived by a combination of the standard Strichartz type estimate \eqref{strx} and Sobolev embeddings. So we may
henceforth assume $|k|\ge1$, $|\eta|\ge 1$, and $|k^{-1}\eta|\ge 1$. One last simple observation concerning the estimation
of $\|\partial_x \prod_{j=1}^l u_j\|_{X_{s_1,s_2,\e;b'}}$: If we assume in addition to $|k_1| \ge |k_{2, \dots , l}|$ that
$u_1$ has a large frequency with respect to $y$, i. e. $|\eta| \ls |\eta_1|$, then from \eqref{3} we also obtain \eqref{5}.

\quad

5. It remains to estimate $\|\partial_x \prod_{j=1}^l u_j\|_{X_{s_1,s_2,\e;b'}}$ in the case where $|k_1| \ge |k_{2, \dots , l}|$ (symmetry assumption as before) and $|\eta_1|\ll|\eta|$. By symmetry among $u_{2,\dots,l}$ we may assume in addition
that $|\eta_2| \ge |\eta_{1,3,\dots,l}|$ and hence that $|k_2|\ll|k|$, because otherwise previous arguments apply with $u_1$
and $u_2$ interchanged. For this distribution of frequencies the symbol of the Fourier multiplier $M(u_1u_3 \cdots u_l,u_2)$
becomes

$$|k\eta_2-k_2\eta|^{\frac12} \sim |k\eta_2|^{\frac12} \gtrsim |k\eta|^{\frac12}.$$

Now let $P(u_1,\dots,u_l)$ denote the projection in Fourier space on $\{|\eta_2| \ge |\eta_{1,3,\dots,l}|\}\cap
\{\lb k_2 \rb \ll|k|\ls k_1\}$. Then by \eqref{bilyx'} we obtain for $b>\frac12$, $b_1 < -\frac12$

\begin{eqnarray*}
 & \|D_y^{\frac12}P(u_1,\dots,u_l)\|_{X_{0,b_1}}\ls \|D_x^{\frac12}u_2\|_{X_{0,b}}\|u_1u_3 \cdots
 u_l\|_{\widehat{L}_x^{\infty}L^2_{yt}}\\
\ls &\|D_x^{\frac12}u_2\|_{X_{0,b}}\|\lb D_x \rb^{\frac12 +}(u_1u_3)\|_{L^2_{xyt}}\prod_{j\ge 4} 
  \|u_j\|_{\widehat{L}_x^{\infty}L^{\infty}_{yt}}\\
\ls &\| D_x ^{0 +}u_1\|_{X_{0,b}}\|\lb D_x \rb^{\frac12 +}u_2\|_{X_{0,b}}\|\lb D_x \rb^{\frac12 +}u_3\|_{X_{0,b}}
\prod_{j\ge 4} \|\lb D_x \rb^{\frac12 +}\lb D_y \rb^{\frac12 +}u_j\|_{X_{0,b}},
\end{eqnarray*}

where besides Sobolev type inequalities we have used \eqref{bilstr} in the last step. Interpolation with \eqref{2} gives
for $0 \le \lambda \le 1$

\begin{eqnarray*}
 &\|D_x^{\frac{\lambda}{2}}D_y^{\frac{1-\lambda}{2}}P(u_1,\dots,u_l)\|_{X_{0,b_1}}  
  \ls  \|D_x^{-\frac{\lambda}{2}+}u_1\|_{X_{0,b}} \|\lb D_x \rb^{\frac12 +}u_2\|_{X_{0,b}}\\
\times &
 \|\lb D_x \rb^{\frac12 +}u_3\|_{X_{0,b}}
\prod_{j\ge 4} \|\lb D_x \rb^{\frac12 +}\lb D_y \rb^{\frac12 +}u_j\|_{X_{0,b}}.
\end{eqnarray*}

Yet another interpolation - now with \eqref{1} - gives for $0< \theta \ll 1$, $b'=\theta b_0 + (1-\theta)b_1$,
$s_x=\frac12 (\lambda(1-\theta)+\theta)$ and $s_y=\frac12(1-\theta)(1-\lambda)$

\begin{eqnarray*}
 &\|D_x^{s_x}D_y^{s_y}P(u_1,\dots,u_l)\|_{X_{0,b'}}  \ls  \|D_x^{-s_x+}u_1\|_{X_{0,b}}
\|\lb D_x \rb^{\frac12 +\frac{\theta}{4}+}\lb D_x^{-1}D_y \rb^{\frac{\theta}{2} +}u_2\|_{X_{0,b}}\\
\times &\|\lb D_x \rb^{\frac12 +\frac{\theta}{4}+}\lb D_x^{-1}D_y \rb^{\frac{\theta}{2} +}u_3\|_{X_{0,b}}
\prod_{j\ge 4} \|\lb D_x \rb^{\frac12 +}\lb D_y \rb^{\frac12 +}u_j\|_{X_{0,b}}.
\end{eqnarray*}

The next step is to equidistribute the $\lb D_y \rb$'s on $u_2,\dots,u_l$. Here we must be careful, because the symmetry
between $u_2$ and $u_3,\dots,u_l$ was broken. But since $u_2$ has the largest $y$-frequency we may first shift a
$\lb D_y \rb^{\frac{l-3}{2(l-1)}+}$ onto $u_2$ and then interpolate among $u_3,\dots,u_l$ in order to obtain

\begin{eqnarray}\label{6}
&\|D_x^{s_x}D_y^{s_y}P(u_1,\dots,u_l)\|_{X_{0,b'}}  \ls  \|D_x^{-s_x+}u_1\|_{X_{0,b}} \\
\times & \prod_{j\ge 2} \|\lb D_x \rb^{\beta_1 +}\lb D_y \rb^{\beta_2 +}\lb D_x^{-1}D_y \rb^{\beta_3 +}u_j\|_{X_{0,b}}, \nonumber
\end{eqnarray}

where $\beta_1 = \frac12 + \frac{\theta}{4}$, $\beta_2=\frac{l-3}{2(l-1)}$ and $\beta_3=\frac{\theta}{2}$. Again we may replace
$\beta_2 +$ by $\beta_2$. Now \eqref{6} is applied to $\lb D_x \rb^{s_1 }\lb D_y \rb^{s_2 }\lb D_x^{-1}D_y \rb^{\e} 
\partial_x P(u_1,\dots,u_l)$, where we can shift the $\lb D_x \rb^{s_1 }$ partly from the product to $u_1$ and the
$\lb D_y \rb^{s_2 }$ partly to $u_2$. Moreover, since $|k_2| \ls |k|$ and $|\eta| \ls |\eta_2|$ we have 
$|k^{-1}\eta|\ls|k_2^{-1}\eta_2|$, so that a $\lb D_x^{-1}D_y \rb^{\e - \theta}$ may be thrown from the product onto $u_2$.
The result is

\begin{eqnarray*}
 \|\partial_x P(u_1,\dots, u_l)\|_{X_{s_1,s_2,\e;b'}}\ls \|D_x^{s_1+1-\theta-2s_x+}u_1\|_{X_{0,b}}\\
\times \|\lb D_x \rb^{\beta_1 +}\lb D_y \rb^{\beta_2 + s_2 -s_y + \theta}\lb D_x^{-1}D_y \rb^{\e - \theta + \beta_3 +}u_2\|_{X_{0,b}}\\
\times  \prod_{j\ge 3}\|\lb D_x \rb^{\beta_1 +}\lb D_y \rb^{\beta_2 }\lb D_x^{-1}D_y \rb^{\beta_3 +}u_j\|_{X_{0,b}}.
\end{eqnarray*}
Here $\beta_2 \le s_2$ and by choosing $\theta < \min{(\e,\frac{2}{3(l-1)}, s_1-\frac12)}$ and $\lambda$ such that $s_y=\beta_2+\theta$
we can achieve that
\begin{itemize}
 \item $s_1+1-\theta-2s_x < s_1 + 2 s_2 + \e$,
 \item $\beta_2 + s_2 -s_y + \theta \le s_2$,
 \item $\e - \theta + \beta_3 < \e$,
\end{itemize}
as well as $\beta_1 < s_1$, $\beta_3 < \e$. This gives

$$\|\partial_x P(u_1,\dots, u_l)\|_{X_{s_1,s_2,\e;b'}}\ls
\|u_1\|_{X_{s_1+2s_2+\e,0,0;b}}\prod_{j \ge 2}\|u_{j}\|_{X_{s_1,s_2,\e;b}}$$

as desired.
\end{proof}

\end{document}